\newtheorem{thm}{Theorem}
\newtheorem{lem}[thm]{Lemma}
\newtheorem{cor}{Corollary}
\newtheorem{defn}{Definition}
\newtheorem{rem}{Remark}
\newtheorem{ass}{Assumption}
\newcommand{\R}{{\mathbb R}}
\newcommand{\prob}[1]{\mathop{{\rm Pr}} \left(#1 \right)}
\newcommand{\Rset}{\mathbb{R}}
\newcommand{\Zset}{\mathbb{Z}}
\newcommand{\Xcal}{{\cal X}}
\newcounter{l1}
\newcounter{l2}
\newcounter{l3}
\newcommand{\bdotlist}{\begin{list}{$\bullet$}{}}
\newcommand{\bboxlist}{\begin{list}{$\Box$}{}}
\newcommand{\bbboxlist}{\begin{list}{\raisebox{.005in}{{\tiny
$\blacksquare$ \ \ }}}{}}
\newcommand{\bdashlist}{\begin{list}{$-$}{} }
\newcommand{\blist}{\begin{list}{}{} }
\newcommand{\barablist}{\begin{list}{\arabic{l1}}{\usecounter{l1}}}
\newcommand{\balphlist}{\begin{list}{(\alph{l2})}{\usecounter{l2}}}
\newcommand{\bAlphlist}{\begin{list}{\Alph{l2}.}{\usecounter{l2}}}
\newcommand{\bdiamlist}{\begin{list}{$\diamond$}{}}
\newcommand{\bromalist}{\begin{list}{(\roman{l3})}{\usecounter{l3}}}
\newcommand{\rand}{\boldsymbol{\xi}}
\newcommand{\randa}{\boldsymbol{\eta}}
\newcommand{\reliz}{\xi}
\newcommand{\risk}{\epsilon}
\newcommand{\indi}[1]{\mathds{1} \left\{#1 \right\}}
\newcommand{\dist}{\mathbb{P}}
\newcommand{\wass}[2]{d_{\rm W}\left( #1, \, #2 \right)}
\newcommand{\saaset}{\widehat{\Xcal}_{\alpha, \gamma}}
\newcommand{\ccset}{\Xcal_{\risk}}
\newcommand{\robfinset}{\widehat{\Xcal}^r_{\alpha}}
\newcommand{\robfinviol}{\widehat{v}^r}
\newcommand{\robinfset}{\widehat{\Xcal}^r_{\alpha,\gamma}}
\newcommand{\robinfviol}{\widehat{v}^r_{\gamma}}
\newcommand{\uncertset}{U_{r_i}(\rand_i)}
\newcommand{\comp}{\overline{\Xcal}_{\risk}}
\newcommand{\viol}[1]{\widehat{v}_{\gamma} (#1)}
\begin{document}

\title{Data-Driven Approximations of Chance Constrained Programs in Nonstationary Environments}

\author{Shuhao Yan, \ Francesca Parise, \ Eilyan Bitar
\thanks{This research was supported by The Nature Conservancy and  the Cornell  Atkinson Center for Sustainability.}
\thanks{The authors are with the School of Electrical and Computer Engineering, Cornell University, Ithaca, NY, 14853, USA. }
}

\maketitle 
\thispagestyle{empty} 

\begin{abstract} 
We study sample average approximations (SAA) of chance constrained programs. SAA methods typically approximate the actual distribution in the chance constraint using an empirical distribution 
constructed from random samples assumed to be independent and identically distributed according to the actual distribution. 
In this paper, we consider a nonstationary variant of this problem,
where the random samples are assumed to be independently drawn in a sequential fashion from an unknown and possibly time-varying distribution. 
This nonstationarity  may be driven by changing environmental conditions present in many real-world applications.
To account for the potential nonstationarity in the data generation process,  we propose a novel robust SAA method  exploiting
information about the Wasserstein distance between the sequence of data-generating distributions and the actual chance constraint distribution. 
As a key result, we obtain distribution-free estimates of the sample size required to ensure that the robust SAA method will yield solutions that are feasible for the chance constraint under the actual distribution with high confidence. 
\end{abstract}

\begin{IEEEkeywords}
Chance constrained programs, data-driven optimization, nonstationary environments, Wasserstein metric.
\end{IEEEkeywords}

\section{Introduction} \label{sec:introduction}
\IEEEPARstart{W}{e} consider a class of  chance constrained optimization problems  of the form
\begin{align} \label{eq:chance}
\underset{x \in \Xcal}{\text{minimize}} \ f(x)  \  \text{subject  to} \   \prob{g(x,  \rand ) \leq 0} \geq 1 - \risk.
\end{align}
Here, $x \in \Rset^n$ denotes the optimization variable, $\Xcal \subset \Rset^n$ is a deterministic set,  $f:\Rset^n \rightarrow \Rset$ is  the objective function,  $g: \Rset^n \times \Rset^d \rightarrow \Rset$ is a given  constraint function,  and $\rand$ is an $\Rset^d$-valued  random vector whose probability distribution is supported on a set $\Xi \subseteq \Rset^d$. The random vector  reflects uncertainty in the   constraint function, and  the risk  parameter $\risk \in [0,1]$ encodes the maximum   probability of constraint violation that the decision maker is willing to tolerate.

Chance constrained programs (CCPs) can be difficult to solve for several reasons. First, chance constraints typically result in nonconvex feasible sets \cite{ahmed2008solving}, and have been shown to result in NP-hard optimization problems in  certain settings  \cite{luedtke2010integer}. Second, the underlying probability distribution of the random variable $\rand$ is often unknown. These issues render the exact solution of CCPs intractable, and motivate the development of data-driven approximation methods.

Well-known approximation techniques include the sample average approximation (SAA) method \cite{luedtke2008sample} and the scenario approach \cite{campi2008exact, Calafiore2006TAC_scenario}. Both methods approximate the original chance constraint using randomly sampled constraints, which are based on samples assumed to be drawn in an independent and identically distributed (i.i.d.) fashion from the \emph{actual} distribution used in the definition of the  original chance constraint.
However, in many real-world applications where the underlying data-generating processes may be nonstationary in nature, it may not be possible to obtain  i.i.d. samples from the actual distribution of interest. 
This nonstationarity may be driven by gradual sensor degradation over time, sudden hardware faults, changing environmental conditions, or shifting users behavior \cite{Ditzler2015nonstationary}. 
Many distributionally robust optimization methods  \cite{delage2010distributionally, wiesemann2014distributionally, esfahani2018data,hu2013kullback, nemirovski2007convex, tseng2016random, Shapiro2017_distribution_robust, erdougan2006ambiguous} have been developed to compensate for potential discrepancies between the data-generating distribution and the actual distribution of interest. 
Importantly, these methods rely on the assumption that the random samples are drawn from a \emph{common} distribution.
This assumption may fail to hold in nonstationary environments, where the random samples are drawn from possibly different distributions over time.\footnote{Although one could  apply the distributionally robust approach in nonstationary environments by considering the worst-case discrepancy between the sequence of data-generating distributions and the chance constraint distribution, such a procedure is likely  overly conservative.}

\subsubsection*{Summary of Contributions} This paper addresses this gap by developing SAA schemes tailored to nonstationary environments. 
We begin by studying SAA schemes in stationary environments. 
As a first contribution, we establish a new bound on the probability that a feasible solution to the SAA violates the original chance constraint, improving upon the previously best-known bound \cite[Theorem 10]{luedtke2008sample}. 
As a second contribution, we  turn to nonstationary environments and propose a novel robust SAA scheme utilizing  information about the distance between the sequence of unknown data-generating distributions and the actual chance constraint distribution. 
While the classic SAA scheme only enforces sampled constraints at the random samples, the key novelty of our approach is to require that each sampled constraint be enforced robustly with respect to an uncertainty set defined as the intersection of the support set $\Xi$ and a norm-ball centered at each random sample $\rand_i$  with a radius $r_i$. The radius $r_i$ depends explicitly on the distance between the data-generating distribution $\dist_i$ and the chance constraint distribution.
The idea of imposing constraints robustly around the samples is also used in distributionally robust optimization methods (e.g. \cite{erdougan2006ambiguous,tseng2016random}) to approximate ambiguous CCPs by robust sampled programs. 
An advantage of our approach is that different radii can be used for different samples to capture variations in data-generating distributions. By using the Wasserstein metric to quantify the distance between distributions, we provide upper bounds on the probability that a feasible solution to the proposed robust SAA violates the original chance constraint (which is termed the probability of infeasibility), as a function of the radii. 
We first derive our results for sets $\mathcal{X}$ with finite cardinality and then suggest an extension of the proposed approach to generic bounded sets $\mathcal{X}$, under an additional Lipschitz assumption on the given constraint function.

\subsubsection*{Organization} The remainder of this paper is organized as follows. 
In Section \ref{sec:SAA}, we recap results pertaining to sample average approximations of CCPs in stationary environments, present a new and improved upper bound on the probability of infeasibility, and compare our bound with the previously best-known bound \cite[Theorem 10]{luedtke2008sample}.
In Section \ref{sec:nonstat}, we introduce the nonstationary data-generation model, propose a robust SAA scheme tailored to this setting, and derive upper bounds on the corresponding probability of infeasibility.
 Section \ref{sec:conclusion} concludes the paper.

\subsubsection*{Notation} 
 Let $\Rset$, $\Rset_+$ and $\Zset$ denote the sets of real numbers, nonnegative real numbers and  integers, respectively. Given a positive integer $n \in \Zset$, we let $[n] := \{1, \dots, n\}$ denote the set of the first $n$ integers. 
 Given a real number $x \in \Rset$, we denote its \emph{ceiling} by $\lceil x \rceil : = \min\{ n \in \Zset \ | \ n \geq x \}$, its \emph{floor} by $\lfloor x \rfloor : = \max\{ n \in \Zset \ | \ n \leq x \}$, and its \emph{positive part} by $(x)_+:=\max\{0,x\}$. 
 We use boldface symbols to denote random variables, and non-boldface symbols to denote particular values in the range of a random variable and other deterministic quantities. We  let $\prob{A}$ denote the probability of an event $A$, and $\mathbb{E}[\rand]$  denote the expected value of a random variable $\rand$. The indicator function is denoted by $\indi{\cdot}$.
\section{Sample Average Approximation  in Stationary Environments} \label{sec:SAA}
We first rewrite the chance constrained  problem \eqref{eq:chance} as
\begin{align*}
\underset{x \in \Xcal}{\text{minimize}} \ f(x)  \  \text{subject  to} \   v(x) \leq  \risk,
\end{align*}
where $ v(x) :=     \prob{g(x,  \rand ) > 0}$ denotes the \emph{constraint violation probability} at a point $x \in \Xcal$. To ensure that the function $v(x)$ is well defined, the function $g(x,  \cdot): \Xi \rightarrow \Rset$ is assumed to be measurable for every $x \in \Rset^n$.  
The feasible region of problem \eqref{eq:chance} is denoted by
\begin{align*}
\Xcal_{\risk} := \{ x \in \Xcal \ | \  v(x) \leq  \risk\}.
\end{align*}

In this section, we consider a stationary data-generating environment and assume that i.i.d. random samples $\rand_1, \rand_2,\dots$ can be drawn from the \emph{actual} distribution specifying the chance constraint. Based on these samples, one can  approximate the  chance constraint by replacing the data-generating distribution with the empirical distribution. This results in an empirical approximation of $v(x)$ as
\begin{align*}
\viol{x} := \frac{1}{N} \sum_{i=1}^N \indi{ g(x, \rand_i) + \gamma > 0 },
\end{align*}
where $N$ is the sample size and $\gamma \in \Rset_+$ is a constraint tightening parameter. 
Using this empirical approximation of the constraint violation probability, Luedtke and Ahmed \cite{luedtke2008sample} define the \emph{sample average approximation} of 
problem \eqref{eq:chance} as
\begin{align} \label{eq:SAA}
\underset{x \in \Xcal}{\text{minimize}} \ f(x)  \  \text{subject  to} \   \viol{x} \leq  \alpha,
\end{align}
where the fixed risk level $\alpha \in [0,1]$ is a design parameter and   may be chosen to be smaller than  the original risk level $\risk$  to compensate for the discrepancy between the actual and empirical distribution.
Overall, the parameters $\alpha$ and $\gamma$ require the \emph{strict} satisfaction of at least $\lceil (1-\alpha)N \rceil$ of the sampled constraints with a margin $\gamma$.\footnote{For $ \alpha = \gamma = 0$,  the SAA \eqref{eq:SAA} reduces to a scenario approximation of the CCP, requiring that all $N$ sampled constraints be satisfied (e.g., \cite{campi2008exact, calafiore2010random}).}

It is  reasonable to expect that, for risk levels $0 \leq \alpha < \risk$,   any feasible solution to the SAA \eqref{eq:SAA} will  be feasible for problem \eqref{eq:chance} with high probability given a large enough  sample size $N$.   
To formalize this intuition, we denote the  feasible region of problem \eqref{eq:SAA} by
\begin{align*}
\saaset:= \{ x \in \Xcal \ | \  \viol{x} \leq  \alpha \},
\end{align*}
and  derive bounds on the probability that any feasible solution to the SAA \eqref{eq:SAA} violates the original chance constraint, which we refer to as the \emph{probability of infeasibility}.

\subsection{Finite $\mathcal{X}$} \label{subsec:stationary with finite X}
Before stating our main result of this section, we recall a  known result providing an upper bound on the  \emph{probability of infeasibility}, as defined above, for sets $\mathcal{X}$ of  finite cardinality. 
\begin{thm}[\!\!{\cite[Theorem 5]{luedtke2008sample}}] \label{thm:saa}
Suppose that $|\Xcal| < \infty$. Let $\gamma=0$. Then
$$
\prob{\saaset \nsubseteq \Xcal_\epsilon} \leq |\Xcal|\Phi(\alpha N; \epsilon, N), 
$$
where $$\Phi( z ; \risk, N) := \sum_{i=0}^{ \lfloor  z \rfloor} \binom{N}{i} \risk^i (1- \risk)^{N-i}, \quad z \in [0,N],$$
denotes the cumulative distribution function of a binomial random variable with $N$ trials and success probability $\risk$.
\end{thm}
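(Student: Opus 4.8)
The plan is to combine a union bound over the ``bad'' points with a stochastic-monotonicity property of the binomial distribution. Let $\comp := \Xcal \setminus \Xcal_\epsilon = \{x \in \Xcal \mid v(x) > \epsilon\}$ be the set of points in $\Xcal$ that are infeasible for the chance constraint. The event $\{\saaset \nsubseteq \Xcal_\epsilon\}$ occurs precisely when some such infeasible point $x$ additionally satisfies $\viol{x} \le \alpha$, so that
$$
\prob{\saaset \nsubseteq \Xcal_\epsilon} = \prob{\textstyle\bigcup_{x \in \comp} \{\viol{x} \le \alpha\}} \le \sum_{x \in \comp} \prob{\viol{x} \le \alpha}.
$$
This is where finiteness of $\Xcal$ is used: the union is over a finite (hence the sum well-defined) set, and the bound is vacuous when $\comp = \emptyset$.

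Next I would fix $x \in \comp$ and identify the law of $\viol{x}$. Since $\gamma = 0$, the quantity $N\viol{x} = \sum_{i=1}^N \indi{g(x,\rand_i) > 0}$ is a sum of $N$ i.i.d.\ Bernoulli random variables, each equal to $1$ with probability $v(x) = \prob{g(x,\rand) > 0}$; hence $N\viol{x}$ is binomial with $N$ trials and success probability $v(x)$. Because $N\viol{x}$ is integer-valued, $\prob{\viol{x} \le \alpha} = \prob{N\viol{x} \le \lfloor \alpha N \rfloor} = \Phi(\alpha N; v(x), N)$.

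The key step is then to bound $\Phi(\alpha N; v(x), N)$ using $v(x) > \epsilon$. For this I would invoke monotonicity of the binomial CDF in its success probability: for fixed $z$ and $N$, the map $p \mapsto \Phi(z; p, N)$ is nonincreasing, because a $\mathrm{Binomial}(N,p)$ random variable is stochastically increasing in $p$ (seen, e.g., by coupling via $N$ i.i.d.\ uniforms on $[0,1]$ and comparing indicators of the events $\{U_i \le p\}$, or by checking that $\partial \Phi(z;p,N)/\partial p \le 0$ after telescoping). Applying this with $v(x) > \epsilon$ gives $\Phi(\alpha N; v(x), N) \le \Phi(\alpha N; \epsilon, N)$, and substituting back yields
$$
\prob{\saaset \nsubseteq \Xcal_\epsilon} \le \sum_{x \in \comp} \Phi(\alpha N; \epsilon, N) = |\comp|\,\Phi(\alpha N; \epsilon, N) \le |\Xcal|\,\Phi(\alpha N; \epsilon, N),
$$
which is the claimed inequality. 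The only nonroutine ingredient is the stochastic-monotonicity fact for the binomial family; the union bound and the identification of the binomial law are immediate, so I do not anticipate a serious obstacle.
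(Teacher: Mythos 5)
Your proof is correct, and it is exactly the standard argument: union bound over the finitely many infeasible points, identification of $N\viol{x}$ as a binomial random variable with success probability $v(x)$, and monotonicity of the binomial CDF in the success probability. The paper itself states this result by citation without proof, but your argument is precisely the one the paper deploys for its own analogous results (the covering step in Theorem \ref{thm:main}, where $\Phi(\alpha N; v(\bar{x}_k), N) \leq \Phi(\alpha N; \risk, N)$ is invoked, and the Poisson-binomial version in Theorem \ref{theorem:robust SAA for finite X}), so there is nothing further to reconcile.
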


A similar upper bound using the finite cardinality assumption is also established in \cite[Theorem 2]{Alamo2015auto_Randomized}.

\subsection{Lipschitz Continuous $g$}\label{subsec:infinite X in stationary} 
As  argued in \cite{luedtke2008sample}, it is possible to extend Theorem \ref{thm:saa} to  settings where the set $\Xcal$ is not finite by relying on the following regularity assumptions.
\begin{ass} \label{ass:lip} There exists a constant  $L \in (0, \infty)$ such that $| g(x, \reliz) -  g(y, \reliz) | \leq L \| x - y\|_\infty$ 
for all $x,y \in \Xcal$ and $\reliz \in \Xi$.
\end{ass}
\begin{ass} \label{ass:bound} There exists a constant $D \in (0, \infty)$ such that $\sup \{ \|x-y \|_\infty \, | \, x,y \in \Xcal \} \leq D$.
\end{ass}

Assumptions \ref{ass:lip} and \ref{ass:bound} ensure boundedness of the constraint function $g$  with respect to the optimization variable $x$ over the set $\Xcal$. Using these assumptions, we provide a novel upper bound on the \textit{probability of infeasibility}, 
which improves upon the previously best-known upper bound provided 
in \cite[Theorem 10]{luedtke2008sample} under certain conditions.

\begin{thm} \label{thm:main} Suppose that Assumptions \ref{ass:lip} and \ref{ass:bound} hold. 
Then
\begin{equation}
\prob{\saaset \nsubseteq \ccset} \leq  \left( \frac{LD}{\gamma} +1 \right)^n   \Phi(\alpha N ;  \risk, N). \label{eq:our bound}
\end{equation}
\end{thm}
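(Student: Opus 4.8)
The plan is to reduce the infinite set $\Xcal$ to a finite situation by covering it with a grid of points so that the Lipschitz assumption lets us control $g$ between grid points by the tightening margin $\gamma$, and then to invoke Theorem~\ref{thm:saa} on the finite grid. First I would fix a resolution $\delta$ for an $\ell_\infty$-grid: by Assumption~\ref{ass:bound} the set $\Xcal$ fits inside an $\ell_\infty$-ball of diameter $D$, so it can be covered by at most $\lceil D/\delta \rceil^n$ cubes of side $\delta$. Pick one representative point in each nonempty cube to form a finite set $\Xcal_\delta \subseteq \Xcal$ with $|\Xcal_\delta| \le \lceil D/\delta \rceil^n$. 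The natural choice that will make the constants come out as in \eqref{eq:our bound} is $\delta = \gamma/L$, giving $|\Xcal_\delta| \le \lceil LD/\gamma \rceil^n \le (LD/\gamma + 1)^n$.

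The key comparison step is to relate feasibility on $\Xcal$ (with margin $\gamma$) to feasibility on the grid $\Xcal_\delta$ (with margin $0$, so that Theorem~\ref{thm:saa} applies). Concretely, I would show the event inclusion
\begin{align*}
\{\saaset \nsubseteq \ccset\} \subseteq \{\widehat{\Xcal}^{\,0}_{\alpha,0}(\Xcal_\delta) \nsubseteq (\Xcal_\delta)_\risk\},
\end{align*}
where the right-hand side refers to the SAA feasible region and chance-constraint feasible region computed with $\Xcal$ replaced by $\Xcal_\delta$ and $\gamma$ replaced by $0$. To see this, suppose some $x \in \saaset$ with $v(x) > \risk$; let $x' \in \Xcal_\delta$ be the representative of the cube containing $x$, so $\|x - x'\|_\infty \le \delta = \gamma/L$. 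Assumption~\ref{ass:lip} gives $|g(x,\reliz) - g(x',\reliz)| \le \gamma$ for every $\reliz$, hence: (i) on the violation side, $g(x,\rand_i) + \gamma > 0$ implies nothing useful directly, so instead I use the feasibility side — if $g(x,\rand_i) + \gamma \le 0$ then $g(x',\rand_i) \le 0$; therefore $\frac1N\sum_i \indi{g(x',\rand_i) > 0} \le \viol{x} \le \alpha$, so $x' \in \widehat{\Xcal}^{\,0}_{\alpha,0}(\Xcal_\delta)$; and (ii) $g(x,\reliz) > 0$ with the Lipschitz bound does not immediately give $g(x',\reliz)>0$, so for the chance-constraint side I instead argue that $v(x) > \risk$ together with $g(x',\reliz) \ge g(x,\reliz) - \gamma$ is the wrong direction — I must be careful about which way the margin is consumed. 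The correct accounting is: the margin $\gamma$ should be spent so that a grid point that looks feasible for the sampled program is genuinely infeasible for the chance constraint. I would therefore show that if $x$ is SAA-feasible (margin $\gamma$) and chance-infeasible, then its representative $x'$ is SAA-feasible (margin $0$) and still chance-infeasible, using $v(x') \ge \prob{g(x,\rand) > \gamma} \ge \ldots$; this needs the monotonicity $\{g(x',\cdot) > 0\} \supseteq \{g(x,\cdot) > \gamma\}$ which does follow from $g(x',\reliz) \ge g(x,\reliz) - \gamma$. Getting this direction exactly right is the main obstacle, and the choice of whether the grid point's margin is $0$ or $\gamma$ must be matched to it.

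Once the event inclusion is established, I would apply Theorem~\ref{thm:saa} to the finite set $\Xcal_\delta$ with risk level $\alpha$ and $\gamma = 0$, yielding
\begin{align*}
\prob{\widehat{\Xcal}^{\,0}_{\alpha,0}(\Xcal_\delta) \nsubseteq (\Xcal_\delta)_\risk} \le |\Xcal_\delta|\,\Phi(\alpha N;\risk,N) \le \left(\frac{LD}{\gamma}+1\right)^n \Phi(\alpha N;\risk,N),
\end{align*}
and combining with the inclusion gives \eqref{eq:our bound}. A subtle point worth checking is that the representative map must be chosen once, deterministically, before the samples are drawn, so that $\Xcal_\delta$ is a fixed finite set and Theorem~\ref{thm:saa} genuinely applies; and the edge cases $\gamma$ very large (so that $\lceil LD/\gamma\rceil = 1$ and the bound degenerates to a single point, which should still be valid) and $LD/\gamma$ not an integer should be handled by the inequality $\lceil t \rceil \le t + 1$. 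I expect the reduction to the grid and the clean application of Theorem~\ref{thm:saa} to be routine; the one genuinely delicate part is the bookkeeping in the previous paragraph that channels the Lipschitz slack $\gamma$ in the direction that preserves both SAA-feasibility and chance-infeasibility simultaneously.
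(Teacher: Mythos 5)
Your overall strategy---discretize $\Xcal$ at scale $\gamma/L$, use the Lipschitz property to transfer sampled feasibility to a nearby grid point at margin $0$, and then invoke the finite-cardinality binomial bound of Theorem~\ref{thm:saa}---is the same idea as the paper's proof. But the step you yourself flag as ``the main obstacle'' is a genuine gap, and as written it cannot be closed. Your event inclusion requires the representative $x'$ of the cube containing a point $x$ with $v(x)>\risk$ to satisfy $v(x')>\risk$ as well. The Lipschitz bound only gives $g(x',\reliz)\ge g(x,\reliz)-\gamma$, hence $v(x')\ge \prob{g(x,\rand)>\gamma}$, and this last quantity can be arbitrarily smaller than $\prob{g(x,\rand)>0}>\risk$ (all the probability mass of $g(x,\rand)$ may sit in $(0,\gamma]$). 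Your chain ``$v(x')\ge \prob{g(x,\rand)>\gamma}\ge\ldots$'' therefore has no valid continuation: you have already spent the entire margin $\gamma$ on transferring SAA-feasibility from $x$ to $x'$, and there is nothing left to preserve chance-infeasibility. Since Theorem~\ref{thm:saa} is only available at $\gamma=0$, you cannot split the margin between the two directions either.

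The missing idea is to not transfer chance-infeasibility at all. Instead of covering all of $\Xcal$ and taking arbitrary cube representatives, cover only the infeasible region $\comp=\Xcal\setminus\ccset$ with an \emph{internal} covering, i.e.\ with centers $\bar{x}_1,\dots,\bar{x}_K$ chosen from within $\comp$ itself (classical covering bounds give $K\le (LD/\gamma+1)^n$). Each center then satisfies $v(\bar{x}_k)>\risk$ by construction, and the margin $\gamma$ is spent solely on the direction you already handle correctly: if some $x\in\comp$ with $\|x-\bar{x}_k\|_\infty\le\gamma/L$ has $\viol{x}\le\alpha$, then $\sum_{i=1}^N\indi{g(\bar{x}_k,\rand_i)>0}\le\alpha N$, an event whose probability is $\Phi(\alpha N; v(\bar{x}_k),N)\le\Phi(\alpha N;\risk,N)$ by monotonicity of the binomial CDF in the success probability. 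A union bound over the $K$ centers finishes the proof. This is exactly the paper's argument; the rest of your write-up (the choice $\delta=\gamma/L$, the feasibility-transfer computation, the deterministic choice of the net before sampling) is correct and matches it.
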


\begin{proof}
 Let  $\sigma := \gamma / L$ and $\overline{\Xcal}_\risk := \Xcal \setminus \Xcal_\risk$. Since $\overline{\Xcal}_\risk \subseteq \Xcal$,  it follows from Assumption \ref{ass:bound}  that there exists an $n$-dimensional hypercube with  edge length equal to $D$ that contains  the set $\overline{\Xcal}_\risk$. Thus,  there exists an \emph{internal $\sigma$-covering}  $\{\bar{x}_1, \dots, \bar{x}_K\} \subseteq \comp$ of the set $\comp$  that satisfies
$
\comp \subseteq \bigcup_{k=1}^K B_{\sigma}^\infty(\bar{x}_k),
$
where $B_{\sigma}^\infty(x) \subseteq \Rset^n$ denotes the closed $L^\infty$-ball of radius $\sigma$  centered at the point $x\in \R^n$. It follows from classical internal-covering number bounds \cite[Lemma 5.7]{wainwright_2019} that $K \leq  \left( D/\sigma +1 \right)^n.  $ 
Let $\Xcal_k :=  B_{\sigma}^{\infty}(\bar{x}_k) \bigcap \overline{\Xcal}_\risk$ for $k =1, \dots,K$.  It follows that
\begin{align}
\nonumber&\hspace{-.3in} \prob{\saaset \nsubseteq \ccset}\\ 
\nonumber& =  \prob{ \exists \, x \in \comp \ \text{such that} \ \viol{x} \leq \alpha }\\
\label{eq:a1} & \leq \sum_{k=1}^K \prob{ \exists \, x \in \Xcal_k     \ \text{such that} \  \viol{x} \leq \alpha }.
\end{align}
Note that  for any $k \in [K]$, it holds that
\begin{align} 
\nonumber & \hspace{-.3in}\prob{ \exists \, x \in \Xcal_k \ \text{such that} \ \viol{x} \leq \alpha }\\
\nonumber  &\leq  \prob{ \inf_{x \in \Xcal_k} \viol{x} \leq \alpha} \\
\nonumber & =  \prob{  \inf_{x \in \Xcal_k} \, \sum_{i=1}^N  \indi{ g(x, \rand_i) + \gamma > 0  } \leq \alpha N  }\\
\nonumber & \leq \prob{ \sum_{i=1}^N    \inf_{x_i \in \Xcal_k} \indi{  g(x_i, \rand_i) + \gamma > 0  } \leq \alpha N  }\\
\nonumber & = \prob{ \sum_{i=1}^N     \indi{ \inf_{x_i \in \Xcal_k} g(x_i, \rand_i) + \gamma > 0  } \leq \alpha N  } \\
\label{eq:bb} & \leq  \prob{ \sum_{i=1}^N     \indi{  g(\bar{x}_k, \rand_i)  > 0  } \leq \alpha N  }.
\end{align} 
The final inequality is a consequence of Assumption \ref{ass:lip} and the facts that $x_i \in \Xcal_k \subseteq B_{\sigma}^{\infty}(\bar{x}_k)$ and $ \sigma = \gamma / L$, which together  imply that 
$$ | g(x_i, \reliz) -  g(\bar{x}_k, \reliz) | \, \leq \,  L \| x_i - \bar{x}_k\|_\infty  \, \leq \, L \sigma \, = \, \gamma$$
for all $x_i \in \Xcal_k$ and $\reliz \in \Xi$. Therefore, $g(\bar{x}_k,\reliz) \leq g(x_i,\reliz)+\gamma$ for all $x_i \in \Xcal_k$ and $\reliz \in \Xi$.

This implies that 
$
\indi{g(\bar{x}_k,\reliz)>0} \leq \indi{\inf_{x_i \in \Xcal_k} g(x_i,\reliz)+\gamma >0 }
$
for all $\reliz \in \Xi$, which proves  inequality \eqref{eq:bb}.
It also holds that
\begin{align}
\nonumber  \prob{ \sum_{i=1}^N     \indi{  g(\bar{x}_k, \rand_i)  > 0  } \leq \alpha N  }  & = \Phi( \alpha N ; v(\bar{x}_k), N) \\
\label{eq:b1} & \leq  \Phi( \alpha N ; \risk, N).
\end{align}
The  equality follows from the fact $\sum_{i=1}^N     \indi{  g(\bar{x}_k, \rand_i)  > 0  } $ is a binomial random variable with $N$ trials and a success probability $v(\bar{x}_k) = \prob{g(\bar{x}_k, \rand)  > 0}$. The   inequality follows from $v(\bar{x}_k)  > \risk$, which is a consequence of $\bar{x}_k \in \overline{\Xcal}_\risk$. Combining inequalities \eqref{eq:a1}, \eqref{eq:bb}, \eqref{eq:b1}, and the fact that $K \, \leq \, ( LD/ \gamma +1 )^n$, the desired result follows.
\end{proof}

\begin{rem}[Comparison to \cite{luedtke2008sample}] The upper bound on the \emph{probability of infeasibility} given in \cite[Theorem 10]{luedtke2008sample} is 
\begin{equation}
    \prob{\saaset \nsubseteq \Xcal_\risk } \leq \left\lceil \frac{1}{\beta} \right\rceil \left\lceil \frac{2LD}{\gamma} \right\rceil^n \Phi(\alpha N; \epsilon-\beta,N), \label{eq: Luedtke bound}
\end{equation}
where $\beta \in (0, \epsilon)$ is an additional parameter associated with the specific approach  used in \cite{luedtke2008sample} to derive such upper bound.
Since $\Phi(\alpha N; \epsilon,N)\le \Phi(\alpha N; \epsilon-\beta,N)$ and $1<\bigl\lceil \frac{1}{\beta}  \bigr\rceil$ for any $\beta\in (0, \epsilon)$,
it is straightforward to see that the bound \eqref{eq:our bound} provided in this paper strictly improves upon the bound  \eqref{eq: Luedtke bound} if the margin $\gamma$ satisfies $\gamma \leq LD$. In Figure \ref{fig:compare upper bounds},  we provide a graphical comparison of the bounds, which shows that the bound \eqref{eq:our bound} provided in this paper improves upon the bound  \eqref{eq: Luedtke bound} by many orders of magnitude for modest sample sizes~$N$.
\end{rem}
    \begin{figure}[ht]
         \centering 
         \includegraphics[width=0.5\textwidth]{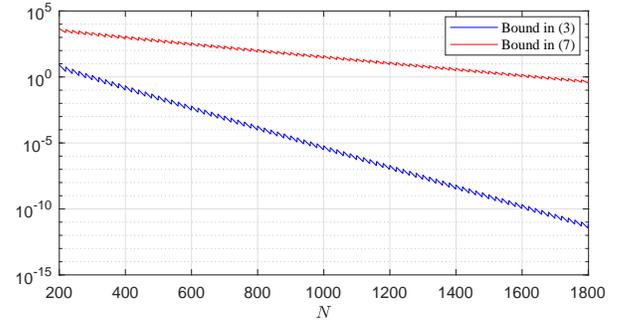}
        \caption{Comparison of  the upper bounds \eqref{eq:our bound} and  \eqref{eq: Luedtke bound}  on the \emph{probability of infeasibility} for $n=10$,  $\risk=0.1$, $\alpha=0.05$, $LD/\gamma=1$, and $\beta=(\risk-\alpha)/2$.}
         \label{fig:compare upper bounds}
     \end{figure}
\section{Robust Sample Average Approximation in Nonstationary Environments}
\label{sec:nonstat}
In this section, we consider a nonstationary variant of the SAA problem, where the random sample is drawn in a sequential fashion from an unknown distribution that may change over time as defined in Section \ref{sec:data}. 
To account for the potential nonstationarity in the data-generating process,  we propose a novel robust SAA in Section \ref{subsec:robust SAA}. Finally, in Sections \ref{subsec:nonstat finite X} and \ref{subsec:nonstat infinite X}, we establish upper bounds on the corresponding \emph{probability of infeasibility} in terms of a target distribution that reflects the current state of the environment.

\subsection{Nonstationary Data-Generation Model} \label{sec:data}
We first introduce the nonstationary data-generation model considered in this paper. Specifically, the random sample $\rand_1, \rand_2, \dots$ is assumed to be independent but not necessarily identically distributed. We denote by  $\dist_i$  the probability measure according to which the random variable $\rand_i$ is defined. All of the probability measures considered are defined on a  common measurable space $(\Xi, \mathcal{B}(\Xi))$, where $\mathcal{B}(\Xi)$ is the Borel sigma-algebra of Borel sets of $\Xi$.  The set $\Xi$ is also assumed to be a Polish space.

The dependence on time is captured by the index of each random variable $\rand_i$, where larger indices correspond to more recently sampled data, and smaller indices correspond to older data.  To constrain the temporal changes in the data-generating distribution over time, we employ the 1-Wasserstein metric, which is defined as follows.
\begin{defn} 
The \emph{1-Wasserstein distance} between two probability measures $\dist$ and $\dist'$ on $\Xi$ is defined as 
\begin{align}
\label{eq:wass_def}
\wass{\dist}{\dist'} \, :=  \underset{\pi \in \Pi(\dist, \dist')}{\inf}   \int_{\Xi \times \Xi}  \|\xi - \xi'\| \, \pi(d\xi, \, d \xi'),
\end{align}
where $\|\cdot\|$ is a norm on $\Xi$, and $\Pi(\dist, \dist')$ denotes the set of all joint probability distributions
of $\rand$ and $\rand'$  with marginal distributions $\dist$ and $\dist'$, respectively. 
\end{defn}

Note that a  joint distribution (coupling) that achieves the infimum in \eqref{eq:wass_def} is guaranteed to exist, as we have assumed that $\Xi$ is a Polish space \cite[Proposition 1.3]{wang2012coupling}. The existence of such couplings will play an instrumental role in the proof of Lemma \ref{lemma:intermediate result}, which is a key  building block in the derivation of the main results of this section.

Using the 1-Wasserstein metric, we constrain the allowable changes in the data-generating process according to
\begin{align}
\label{eq:dist}
\wass{\dist_i}{\dist_{i+k}} \leq \rho(k) 
\end{align}
for all indices $i \geq 1$ and $ k \geq 0$, where $\rho: \Rset_+ \rightarrow \Rset_+$ is a known function satisfying $\rho(0) = 0$.  The function $\rho(k)$ can be understood as a \emph{variation budget}, 
limiting the extent to which the underlying data-generating distribution can change over a given span of $k$ time periods. It allows for a broad range of temporal shifts in the data-generating process, including  gradual drifts over time, large but infrequent changes, or a  combination thereof. 

We note that, while our model uses the 1-Wasserstein metric, the results of this section also hold for alternative probability metrics/distances that imply the satisfaction of condition \eqref{eq:dist} under the 1-Wasserstein metric. We refer the reader to \cite{gibbs2002choosing} for a  survey  of bounds relating different probability metrics/distances to the 1-Wasserstein metric.

\subsection{Robust Sample Average Approximation}\label{subsec:robust SAA}
Given a random sample $\rand_1, \dots, \rand_N$, we are interested in constructing 
a sample average approximation
of the chance constrained feasible region defined in terms of the distribution of the ensuing $(N+1)$-th random variable $\rand_{N+1}$.
We denote the corresponding feasible region by
\begin{align*}
\Xcal_{\risk}^{N+1} : = \{x \in \Xcal \ | \ v^{N+1}(x)  \leq \risk\}, 
\end{align*}
where $v^{N+1}(x) := \prob{g(x, \rand_{N+1})  > 0}$ denotes the \emph{target constraint violation probability} at a point $x \in \Xcal$ under the \emph{target distribution} $\dist_{N+1}$. 
The target distribution  should be understood as reflecting the current state of the environment. To account for the potential discrepancy between the sequence of sampling distributions $\{\dist_i\}_{i=1}^N$ and the \emph{target distribution} $\dist_{N+1}$,  we  consider  a robust empirical estimate of the target constraint violation probability given by 
\begin{equation}
    \robfinviol(x) := \frac{1}{N} \sum_{i=1}^N \indi{\exists \, u \in \uncertset  \text{ such that } g(x,u) >0}, \nonumber
\end{equation}
where 
\vspace{-.1in}
\begin{align} \label{eq:uncertainty set}
\uncertset :=  \{ u \in \Xi  \ | \  \| u- \rand_i \| \leq r_i\}
\end{align}
denotes the intersection of $\Xi$ and a closed norm-ball of radius $r_i \geq 0$ centered at the sample $\rand_i$ for all $i \in [N]$.  The radii $r = (r_1, \dots, r_N) \in \Rset_+^N$ of the norm-balls used in the above approximation are design parameters to be specified by the user. Using the above approximation of the target constraint violation probability, we define a  \emph{robust sample average  approximation} of the feasible region  $\Xcal_\risk^{N+1}$ as
\begin{equation}
    \robfinset:=\{ x \in \Xcal \ | \ \robfinviol(x) \leq \alpha\}. \label{eq:robust saa}
\end{equation}

Note that the robust SAA \eqref{eq:robust saa} reduces to the conventional SAA if the norm-ball radii are all equal to zero.
If, on the other hand, the norm-balls have nonzero radii, then the robust SAA requires the satisfaction of at least $\lceil(1-\alpha)N\rceil$ of the robust sampled  constraints given by
\begin{align}
g(x,u) \leq 0 \quad \forall \, u \in \uncertset,  \quad i = 1, \dots, N, \label{eq:robust sampled constraint}
\end{align}
where $\uncertset$ can be interpreted as the \emph{uncertainty set} associated with each constraint.

Naturally, the radii of uncertainty sets $\uncertset$ can be designed to compensate for  the worst-case discrepancy between  the sequence of  sampling distributions $\{\dist_i\}_{i=1}^N$  and the target distribution $\dist_{N+1}$.  Intuitively,  the  target random variable $\rand_{N+1}$ will lie within the uncertainty set $\uncertset$ with high probability if its radius $r_i$ is sufficiently large. Building on this intuition, we prove the following result,
which relates the robust constraint satisfaction probabilities to the target constraint satisfaction probability.
\begin{lem}\label{lemma:intermediate result}
Let $x \in \Xcal$. For all $i \in [N]$, it holds that
\begin{align*}
&\prob{ g(x,u) \leq 0 \ \forall \, u \in  \uncertset}  \\ 
& \hspace{.8in}  \leq \prob{g(x,\rand_{N+1}) \leq 0}  \,  +  \, \frac{\rho(N+1-i)}{r_i}.
\end{align*}
\end{lem}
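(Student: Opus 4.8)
The plan is to bound the probability that the robust sampled constraint at index $i$ is satisfied while the target constraint at $\rand_{N+1}$ is violated, and show this "bad event" has probability at most $\rho(N+1-i)/r_i$. Concretely, define the events
$A_i := \{ g(x,u) \leq 0 \ \forall\, u \in \uncertset \}$ and $B := \{ g(x, \rand_{N+1}) \leq 0 \}$. Then
$\prob{A_i} = \prob{A_i \cap B} + \prob{A_i \cap B^c} \leq \prob{B} + \prob{A_i \cap B^c}$,
so it suffices to show $\prob{A_i \cap B^c} \leq \rho(N+1-i)/r_i$. The event $A_i \cap B^c$ says that the constraint holds everywhere on the ball of radius $r_i$ around $\rand_i$, yet fails at $\rand_{N+1}$; this forces $\rand_{N+1}$ to lie outside that ball, i.e. $\| \rand_{N+1} - \rand_i \| > r_i$. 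Hence $\prob{A_i \cap B^c} \leq \prob{ \| \rand_{N+1} - \rand_i \| > r_i }$.

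Next I would control $\prob{ \| \rand_{N+1} - \rand_i \| > r_i }$ using the Wasserstein budget. Since $\Xi$ is Polish, by \cite[Proposition 1.3]{wang2012coupling} there exists an optimal coupling $\pi^\star \in \Pi(\dist_i, \dist_{N+1})$ of $\rand_i$ and $\rand_{N+1}$ attaining the infimum in \eqref{eq:wass_def}. Realizing $(\rand_i, \rand_{N+1})$ under this coupling and applying Markov's inequality gives
$\prob{ \| \rand_{N+1} - \rand_i \| > r_i } \leq \frac{1}{r_i} \, \mathbb{E}_{\pi^\star}\!\left[ \| \rand_{N+1} - \rand_i \| \right] = \frac{1}{r_i} \wass{\dist_i}{\dist_{N+1}}$,
and then condition \eqref{eq:dist} with $k = N+1-i$ yields $\wass{\dist_i}{\dist_{N+1}} \leq \rho(N+1-i)$. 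Chaining these bounds completes the argument.

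One technical point to handle carefully is the case $r_i = 0$: there the claimed bound is vacuous (the right-hand side is $+\infty$ unless $\rho(N+1-i)=0$), so we may assume $r_i > 0$ when dividing. A second subtlety is measurability — we should note that the event $A_i$ (a "for all $u$" statement over an uncountable set) is exactly the event defining $\robfinviol$, whose measurability is implicitly assumed in the setup, and that realizing the pair $(\rand_i,\rand_{N+1})$ on the coupling's probability space does not change the marginal law of either variable, so the probabilities of $A_i$ and $B$ are unaffected. The main obstacle, and the only genuinely nontrivial step, is the coupling argument: one must be explicit that $A_i \cap B^c$ implies $\rand_{N+1} \notin \uncertset$ and hence (since $\rand_i \in \Xi$ and $\rand_{N+1}\in\Xi$ always) that $\|\rand_{N+1}-\rand_i\| > r_i$, and then that this last event's probability is coupling-independent so that we may evaluate it on the optimal coupling. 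Everything else is Markov's inequality and the definition of the Wasserstein distance.
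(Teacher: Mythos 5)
Your proof is correct and takes essentially the same route as the paper's: the paper introduces an auxiliary copy $\randa_i$ of $\rand_{N+1}$ optimally coupled with $\rand_i$ and applies the law of total probability on the event $\{\|\rand_i-\randa_i\|\le r_i\}$ followed by Markov's inequality and the variation budget, which is exactly your decomposition carried out on the optimal coupling's probability space. One small imprecision: the probability of $\{\|\rand_{N+1}-\rand_i\|>r_i\}$ is \emph{not} coupling-independent (in the original model $\rand_i$ and $\rand_{N+1}$ are independent, under which this probability can exceed $\wass{\dist_i}{\dist_{N+1}}/r_i$), but since you correctly observe that $\prob{A_i}$ and $\prob{B}$ depend only on the marginal laws and are therefore unchanged when the pair is re-realized under the optimal coupling, the argument goes through.
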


\begin{proof}
Let $\randa_1, \dots, \randa_N$ be a sequence of i.i.d. random variables, where 
each random variable in the sequence is identically distributed to $\rand_{N+1}$. 
Also, for all $i\in [N]$, let each pair of random variables $(\rand_i, \, \randa_i)$ be coupled according to the joint distribution that attains the 1-Wasserstein distance between their respective distributions $\dist_i$ and $\dist_{N+1}$. As we assume that $\Xi$ is a Polish space, such an optimal coupling exists \cite[Proposition 1.3]{wang2012coupling}. This, together with \eqref{eq:dist}, implies 
\begin{equation}
\mathbb{E}\left[ \,   \| \rand_i - \randa_i\|  \,\right]  \, = \, \wass{\dist_i}{\dist_{N+1}} \, \leq \, \rho(N+1-i). \nonumber
\end{equation}
By the law of total probability, we have that, for all $i \in [N]$ and $x \in \Xcal$,
\begin{align}
& \hspace{-12pt}\prob{ g(x,u) \leq 0 \ \forall \, u \in  \uncertset} \nonumber \\
    &=\prob{g(x,u)\leq 0 \ \forall \, u \in  \uncertset, \  \|\rand_i-\randa_i\|\leq r_i } \nonumber\\
    &\phantom{=\,} + \ \prob{g(x,u)\leq 0 \ \forall \, u \in  \uncertset, \ \|\rand_i-\randa_i\|> r_i} 
    \nonumber \\
    &\leq \prob{g(x,\randa_i) \leq 0} + \prob{\|\rand_i-\randa_i\|> r_i } \nonumber \\
    & \leq \prob{g(x,\randa_i) \leq 0} +\frac{\mathbb{E}\left[ \,   \| \rand_i - \randa_i\|  \,\right] }{r_i} \nonumber \\
    &\leq \prob{g(x,\rand_{N+1}) \leq 0} +\frac{\rho(N+1-i)}{r_i}
    \nonumber,
\end{align}
where the second to last inequality follows from the application of Markov's inequality. The last inequality follows from the facts that $\randa_i$ is identically distributed to $\rand_{N+1}$ and $\mathbb{E}\left[ \,   \| \rand_i - \randa_i\|  \,\right] \leq \rho(N+1-i) $,  completing the proof.
\end{proof}

Before presenting the main result of this section, it is helpful to define the family of Poisson binomial random variables. 
\begin{defn} A \emph{Poisson  binomial  random variable} is defined as a finite sum  $\mathbf{z} = \sum_{i=1}^N \mathbf{z}_i$, where $\mathbf{z}_1, \dots, \mathbf{z}_N$ are  independent Bernoulli random variables with expectations $\mathbb{E}[\mathbf{z}_i] = q_i$. Its cumulative distribution function  is given by 
\begin{align*}
 \prob{\mathbf{z} \leq z}  = \sum_{k=0}^{\lfloor z \rfloor} \sum_{ A \in S_k}  \left( \prod_{i \in A} q_i \prod_{i \notin A} (1-q_i) \right), \quad z \in [0,N],
\end{align*}
where $S_k$ is the set of all subsets of $[N]$ of cardinality $k$. We denote its cumulative distribution function  by $\Psi \left(z; q_1, \dots, q_N\right) :=   \prob{\mathbf{z} \leq z}$ for $z \in [0,N]$.
\end{defn}

\subsection{Finite $\mathcal{X}$}\label{subsec:nonstat finite X}
The main result of this  paper is given in Theorem \ref{theorem:robust SAA for finite X}, providing
an upper bound on the \emph{probability of infeasibility} for finite $\Xcal$ in nonstationary environments.

\begin{thm}\label{theorem:robust SAA for finite X}
Suppose that $|\mathcal{X}| < \infty$. Then
\[
\prob{ \robfinset \nsubseteq \mathcal{X}_\epsilon^{N+1}} \leq |\mathcal{X}| \Psi(\alpha N; p_1,\ldots, p_N),
\]
where $$p_i:=\left(\epsilon-\frac{\rho(N+1-i)}{r_i} \right)_+ $$
 for $i =1,\ldots,N$.
\end{thm}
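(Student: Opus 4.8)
The plan is to mimic the structure of the proof of Theorem~\ref{thm:main}, replacing the binomial tail by a Poisson binomial tail and replacing the geometric covering argument with a direct union bound over the finitely many points of $\overline{\Xcal}^{N+1}_\risk := \Xcal \setminus \Xcal^{N+1}_\risk$. First I would write
\[
\prob{\robfinset \nsubseteq \Xcal^{N+1}_\risk} = \prob{\exists\, x \in \overline{\Xcal}^{N+1}_\risk \text{ such that } \robfinviol(x) \leq \alpha} \leq \sum_{x \in \overline{\Xcal}^{N+1}_\risk} \prob{\robfinviol(x) \leq \alpha},
\]
so that it suffices to bound $\prob{\robfinviol(x) \leq \alpha}$ by $\Psi(\alpha N; p_1,\dots,p_N)$ for each fixed $x$ with $v^{N+1}(x) > \risk$, and then use $|\overline{\Xcal}^{N+1}_\risk| \leq |\Xcal|$.

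Next, fix such an $x$. Define the Bernoulli random variables $\mathbf{z}_i := \indi{\exists\, u \in \uncertset \text{ such that } g(x,u) > 0}$, so that $N\,\robfinviol(x) = \sum_{i=1}^N \mathbf{z}_i$. The $\rand_i$ are independent, hence the $\mathbf{z}_i$ are independent, and $N\,\robfinviol(x)$ is therefore a Poisson binomial random variable with success probabilities $q_i := \mathbb{E}[\mathbf{z}_i] = \prob{\exists\, u \in \uncertset \text{ such that } g(x,u) > 0} = 1 - \prob{g(x,u) \leq 0 \ \forall\, u \in \uncertset}$. By Lemma~\ref{lemma:intermediate result} applied at this $x$, together with $\prob{g(x,\rand_{N+1}) \leq 0} = 1 - v^{N+1}(x) < 1 - \risk$, we get
\[
q_i \,\geq\, 1 - \Bigl(\prob{g(x,\rand_{N+1}) \leq 0} + \tfrac{\rho(N+1-i)}{r_i}\Bigr) \,>\, \risk - \tfrac{\rho(N+1-i)}{r_i}.
\]
Since $q_i \geq 0$ trivially, this gives $q_i > p_i = \bigl(\risk - \rho(N+1-i)/r_i\bigr)_+$ (with the convention $\rho(N+1-i)/r_i = +\infty$ when $r_i = 0$, in which case $p_i = 0 \leq q_i$ automatically).

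Finally I would invoke a monotonicity property of the Poisson binomial CDF: $\Psi(z; q_1,\dots,q_N)$ is nonincreasing in each $q_i$, so $q_i \geq p_i$ for all $i$ yields $\prob{N\,\robfinviol(x) \leq \alpha N} = \Psi(\alpha N; q_1,\dots,q_N) \leq \Psi(\alpha N; p_1,\dots,p_N)$. Combining with the union bound completes the proof. The main obstacle — or at least the step requiring the most care — is this stochastic-monotonicity claim for the Poisson binomial distribution; I would justify it by the standard coupling argument (raising a single $q_i$ while fixing the others corresponds to flipping an independent coin and can only increase the sum in the stochastic order, since $\sum_j \mathbf{z}_j$ is coordinatewise nondecreasing), and then iterate over $i$. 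One should also handle the degenerate cases $r_i = 0$ and $p_i = 0$ cleanly, and note that if $\overline{\Xcal}^{N+1}_\risk = \emptyset$ the bound holds trivially.
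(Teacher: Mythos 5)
Your proposal is correct and follows essentially the same route as the paper's proof: a union bound over the finite set $\overline{\Xcal}^{N+1}_\risk$, identification of $N\,\robfinviol(x)$ as a Poisson binomial random variable, and an application of Lemma~\ref{lemma:intermediate result} to lower-bound each success probability by $p_i$. The only difference is that you explicitly state and justify (via coupling) the monotonicity of the Poisson binomial CDF in each success parameter, a step the paper invokes implicitly without comment; your treatment of the degenerate cases $r_i=0$ is likewise a harmless refinement.
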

\begin{proof}
Let $\overline{\mathcal{X}}_\epsilon^{N+1}:= \mathcal{X} \setminus \mathcal{X}_\epsilon^{N+1}$. We have that
\begin{align} 
& \prob{ \robfinset \nsubseteq \mathcal{X}_\epsilon^{N+1}} \nonumber \\
&=\prob{ \exists x \in \overline{\mathcal{X}}_\epsilon^{N+1} \text{ such that } x \in \robfinset} \nonumber \\
&\leq \sum_{x \in \overline{\mathcal{X}}_\epsilon^{N+1}} \prob{\robfinviol(x) \leq \alpha} \nonumber \\
 &=\sum_{x \in \overline{\mathcal{X}}_\epsilon^{N+1}} 
\prob{ \sum_{i=1}^N \indi{\exists u \in \uncertset, \, g(x,u)\!>\!0} \leq \alpha N} \nonumber \\
&=\sum_{x \in \overline{\mathcal{X}}_\epsilon^{N+1}} \Psi(\alpha N;v_1(x),\ldots, v_N(x)). \nonumber
\end{align}
The final equality follows from the fact that $\sum_{i=1}^N \indi{\exists u \in \uncertset, \, g(x,u)>0}$ is a Poisson binomial random variable. Specifically, it is the sum of $N$ independent Bernoulli random variables with heterogeneous success probabilities  given by $v_i(x) := \prob{\exists u \in \uncertset, \, g(x,u)>0}$ for $i=1,\ldots,N$. Note that, for all $x \in \comp^{N+1}$ and $i\in [N]$, it holds that 
\begin{align*}
   v_i(x) &= 1-\prob{g(x,u)\leq 0 \ \forall \, u \in  \uncertset} \\
    &\geq 1-\prob{g(x,\rand_{N+1}) \leq 0}  - \frac{\rho(N+1-i)}{r_i}\\
    &>\risk - \frac{\rho(N+1-i)}{r_i}.
\end{align*}
The first inequality follows from Lemma \ref{lemma:intermediate result} since $\comp^{N+1} \subseteq \Xcal$. The second inequality follows from  $x \in \comp^{N+1}$.
Therefore,
\[ 
v_i(x) \geq p_i = \left( \epsilon - \frac{\rho(N+1-i)}{r_i} \right)_+
\]
for all $i \in [N]$ and $x \in \overline{\mathcal{X}}_\epsilon^{N+1}$.
The claim follows, as
\begin{align}
\nonumber \prob{\robfinset \nsubseteq \ccset^{N+1}} & \leq  \sum_{ x \in  \overline{\Xcal}_\risk^{N+1} } \Psi \left(\alpha N; v_1(x), \dots, v_N(x)\right)\\
\nonumber & \leq  \sum_{ x \in  \overline{\Xcal}_\risk^{N+1} } \Psi \left(\alpha N; p_1, \dots, p_N\right)\\
\nonumber & \leq \  |\Xcal| \Psi \left(\alpha N; p_1, \dots, p_N\right). 
\end{align}  
\end{proof}

\begin{rem}[Choosing the Uncertainty Sets] Each of the uncertainty sets specified in \eqref{eq:uncertainty set} is defined as a norm-ball intersected with the support set $\Xi$. An important consequence of this definition is that the feasible set of the robust SAA defined in \eqref{eq:robust saa} is guaranteed to contain the so-called \emph{robust feasible set} associated with this problem, that is, 
\begin{align} \label{eq:contain}
   \robfinset \supseteq \{ x \in \Xcal \ | \ g(x,u) \leq 0 \ \forall \, u \in \Xi\}.
\end{align}
It follows from \eqref{eq:contain} that nonemptiness of the robust feasible set implies nonemptiness of the robust SAA feasible set.  However, defining the uncertainty sets in this manner requires knowledge of the underlying support set. If 
this information
is not available, then the  uncertainty sets in \eqref{eq:robust sampled constraint} can  be redefined as norm-balls $U_{r_i}(\rand_i):=\{ u \in \Rset^d  \, | \,  \| u- \rand_i \| \leq r_i\}$ without intersecting them  with the support set. Although the resulting robust SAA will yield solutions that are potentially more conservative, the claim in Theorem \ref{theorem:robust SAA for finite X} continues to hold true. 
\end{rem}

\begin{rem}[Tractability of the Robust Approximation] The robust sampled constraints defined in \eqref{eq:robust sampled constraint} admit computationally tractable reformulations for a large family of constraint functions and uncertainty sets. For example, if the constraint function is a bi-affine function of the form $g(x,u) = x^\top u - b$ and the uncertainty sets can be described as convex polytopes (e.g., if  $\Xi$ is a convex polytope and $\|\cdot\|$ is the  $L^1$ or $L^\infty$-norm), then each of the robust constraints  in \eqref{eq:robust sampled constraint} can be reformulated as a finite collection of linear constraints. We refer the reader to \cite{bertsimas2011theory} for a  comprehensive discussion about  the families of robust constraints that admit equivalent reformulations as tractable convex constraints.
\end{rem}

\begin{rem}[Choice of Radii] 
Theorem \ref{theorem:robust SAA for finite X} allows for flexibility in how to choose the uncertainty set radii. 
A natural choice is to let each radius $r_i$ be proportional to the Wasserstein distance between the sampling distribution $\dist_i$ and the target distribution $\dist_{N+1}$, that is,
\begin{equation}\label{eq:radii}
r_i = \rho(N + 1 -i)/\theta
\end{equation}
for some constant $\theta > 0$. 
We note that this choice of radii has an intuitive interpretation:  older (less informative) samples are  associated with uncertainty sets with larger radii to account for the greater potential discrepancy between their distributions and the target distribution.  Therefore, it is natural to expect that robust constraints associated with older samples (larger uncertainty sets) are more likely to be discarded when solving the resulting robust SAA, as their exclusion will frequently result in the greatest enlargement of the feasible region $\robfinset$. Note that for $\alpha > 0$,  at most $\lfloor \alpha N \rfloor $ constraints can be discarded. 
\end{rem}

By using Hoeffding's inequality \cite{Hoeffding1963inequality}, one can upper bound the tail of the Poisson binomial distribution in Theorem \ref{theorem:robust SAA for finite X} as
\[ \textstyle 
\Psi(\alpha N; p_1,\ldots,p_N) \leq \exp \left( -2 N\left( \frac{1}{N}\sum_{i=1}^N p_i-\alpha  \right)^2 \right)
\]
if $(1/N)\sum_{i=1}^N p_i > \alpha$. This  upper bound can be used to  characterize
a distribution-free lower bound on the sample size $N$ needed to  ensure that $\robfinset$ is contained within $\Xcal_\epsilon^{N+1}$ with a given confidence $1-\delta$. Corollary  \ref{cor:sample bound} illustrates this procedure for the choice of radii given in \eqref{eq:radii}.

\begin{cor}\label{cor:sample bound} Let $\delta \in (0,1)$, $\alpha \in (0, \risk)$, and  $\theta \in (0, \risk - \alpha)$. If the uncertainty set radii are specified as $r_i = \rho(N + 1 -i)/\theta$ for all $i \in [N]$ and  
\begin{equation}
N \geq \frac{1}{2(\epsilon -\alpha - \theta)^2} \ln \left( \frac{|\mathcal{X}|}{\delta} \right), 
\end{equation}
then $\prob{ \robfinset \subseteq \mathcal{X}^{N+1}_\epsilon} \geq 1-\delta$.
\end{cor}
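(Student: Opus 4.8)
The plan is to combine Theorem~\ref{theorem:robust SAA for finite X} with the stated Hoeffding bound on the tail of the Poisson binomial distribution, and then solve the resulting inequality for $N$. First I would specialize the radii to the prescribed choice $r_i = \rho(N+1-i)/\theta$. Substituting this into the definition of $p_i$ from Theorem~\ref{theorem:robust SAA for finite X} gives
\[
p_i = \left( \epsilon - \frac{\rho(N+1-i)}{\rho(N+1-i)/\theta} \right)_+ = (\epsilon - \theta)_+ = \epsilon - \theta,
\]
where the last equality uses $\theta \in (0,\epsilon-\alpha) \subset (0,\epsilon)$, so that $\epsilon-\theta>0$; note this holds \emph{uniformly in $i$}, which is the key simplification. (A small subtlety: if $\rho(N+1-i)=0$ for some $i$, then $r_i=0$ and the $i$-th term of $\robfinviol$ reduces to the ordinary indicator $\indi{g(x,\rand_i)>0}$; in that case $v_i(x) > \epsilon \ge \epsilon-\theta$ on $\overline{\Xcal}_\epsilon^{N+1}$ anyway, so the bound $v_i(x)\ge p_i$ with $p_i=\epsilon-\theta$ still holds and the argument is unaffected.)

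Second, with all $p_i$ equal to $\epsilon-\theta$, the average $\frac1N\sum_{i=1}^N p_i$ equals $\epsilon-\theta$, and the condition $\frac1N\sum_i p_i > \alpha$ becomes $\epsilon - \theta > \alpha$, i.e. $\theta < \epsilon - \alpha$, which is exactly the hypothesis $\theta \in (0,\epsilon-\alpha)$. Hence the Hoeffding bound stated just before the corollary applies and yields
\[
\Psi(\alpha N; p_1,\ldots,p_N) \le \exp\!\left( -2N(\epsilon-\alpha-\theta)^2 \right).
\]
Combining with Theorem~\ref{theorem:robust SAA for finite X},
\[
\prob{ \robfinset \nsubseteq \Xcal_\epsilon^{N+1}} \le |\Xcal| \exp\!\left( -2N(\epsilon-\alpha-\theta)^2 \right).
\]

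Third, I would require the right-hand side to be at most $\delta$ and solve for $N$: the inequality $|\Xcal|\exp(-2N(\epsilon-\alpha-\theta)^2) \le \delta$ is equivalent, after taking logarithms and rearranging (using $\epsilon-\alpha-\theta>0$ so the inequality direction is preserved), to
\[
N \ge \frac{1}{2(\epsilon-\alpha-\theta)^2}\ln\!\left(\frac{|\Xcal|}{\delta}\right),
\]
which is precisely the stated sample-size bound. Under this condition we get $\prob{\robfinset \nsubseteq \Xcal_\epsilon^{N+1}} \le \delta$, hence $\prob{\robfinset \subseteq \Xcal_\epsilon^{N+1}} \ge 1-\delta$, completing the proof.

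There is essentially no hard obstacle here; the result is a routine corollary. The only point requiring a little care is the one flagged above: making sure the substitution $p_i = \epsilon-\theta$ is legitimate for every $i$, including degenerate indices where $\rho(N+1-i)=0$ (so that $r_i = 0$ and the norm-ball is a singleton). One should also note that the inequality defining $N$ is consistent —  since $\theta<\epsilon-\alpha<\epsilon$, the denominator $(\epsilon-\alpha-\theta)^2$ is strictly positive and the logarithm $\ln(|\Xcal|/\delta)$ is positive because $|\Xcal|\ge 1$ and $\delta\in(0,1)$ — so the stated lower bound on $N$ is a finite positive number, as it must be.
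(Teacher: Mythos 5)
Your proof is correct and follows exactly the route the paper intends: substitute the prescribed radii into Theorem \ref{theorem:robust SAA for finite X} to get $p_i = \epsilon - \theta$ uniformly, apply the Hoeffding tail bound stated just before the corollary, and solve for $N$. Your handling of the degenerate case $\rho(N+1-i)=0$ (where $r_i=0$ and one uses that the Wasserstein constraint forces $\dist_i = \dist_{N+1}$) is a careful touch the paper leaves implicit.
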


\subsection{Lipschitz Continuous $g$}\label{subsec:nonstat infinite X}
In this section, we propose a slight modification of the robust SAA to address problems where the set $\Xcal$ may not be finite. We modify the approximation by incorporating a constraint tightening parameter as in Section \ref{subsec:infinite X in stationary}. Using the modified approximation, we provide an upper bound on the \emph{probability of infeasibility} that holds under Assumptions \ref{ass:lip} and \ref{ass:bound}. 
The modified robust SAA of $\Xcal_\risk^{N+1}$ is defined as
\begin{equation} 
\robinfset:= \{ x \in \Xcal \ |\ \robinfviol(x) \leq \alpha \} \nonumber ,
\end{equation} 
where  $\gamma \in \Rset_+$,  $r=(r_1,\ldots,r_N)\in \Rset_+^N$, and 
\[\robinfviol(x) \!:=\!\frac{1}{N} \!\sum_{i=1}^N \indi{ \exists \, u \in \uncertset \text{ such that } g(x,u)\!+\!\gamma\!>\!0   }. \]
\begin{thm}\label{theorem:robust SAA for infinite X}
Suppose that Assumptions \ref{ass:lip} and \ref{ass:bound} hold. 
Then
\begin{equation*}
\prob{\robinfset \nsubseteq \mathcal{X}^{N+1}_\epsilon } \leq \left( \frac{LD}{\gamma} +1 \right)^n \Psi(\alpha N; p_1,\ldots,p_N), \label{eq:error prob bound in nonstationary for infinite X}
\end{equation*}
where  $$p_i:=\left( \epsilon- \frac{\rho(N+1-i)}{r_i}\right)_+ $$ for $i=1,\ldots, N$.
\end{thm}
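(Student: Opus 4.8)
The plan is to combine the covering-number argument from the proof of Theorem~\ref{thm:main} with the Poisson-binomial bookkeeping from the proof of Theorem~\ref{theorem:robust SAA for finite X}. First I would set $\sigma := \gamma/L$ and $\overline{\Xcal}^{N+1}_\risk := \Xcal \setminus \Xcal^{N+1}_\risk$, and invoke Assumption~\ref{ass:bound} together with the internal-covering bound \cite[Lemma 5.7]{wainwright_2019} to obtain points $\{\bar x_1,\dots,\bar x_K\} \subseteq \overline{\Xcal}^{N+1}_\risk$ with $K \le (LD/\gamma + 1)^n$ such that $\overline{\Xcal}^{N+1}_\risk \subseteq \bigcup_{k=1}^K B^\infty_\sigma(\bar x_k)$. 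Writing $\Xcal_k := B^\infty_\sigma(\bar x_k) \cap \overline{\Xcal}^{N+1}_\risk$, a union bound gives $\prob{\robinfset \nsubseteq \Xcal^{N+1}_\risk} \le \sum_{k=1}^K \prob{\exists\, x \in \Xcal_k \text{ s.t. } \robinfviol(x) \le \alpha}$.

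Next, for a fixed $k$ I would push the infimum inside the sum exactly as in \eqref{eq:bb}: bound $\prob{\exists x \in \Xcal_k \text{ s.t. } \robinfviol(x)\le\alpha}$ by $\prob{\inf_{x\in\Xcal_k}\robinfviol(x)\le\alpha}$, interchange the infimum with the sum over $i$, and move it inside the indicator to get $\prob{\sum_{i=1}^N \indi{\inf_{x_i \in \Xcal_k} g(x_i,u) + \gamma > 0 \text{ for some } u \in U_{r_i}(\rand_i)} \le \alpha N}$ — here one must be slightly careful that the ``$\exists u$'' quantifier and the $\inf_{x_i}$ commute appropriately, but since $\inf_{x_i} \sup_u \ge \sup_u \inf_{x_i}$ is not quite what is needed, I would instead argue pointwise in $u$ as in the original proof: Assumption~\ref{ass:lip} with $\|x_i - \bar x_k\|_\infty \le \sigma = \gamma/L$ gives $g(\bar x_k, u) \le g(x_i, u) + \gamma$ for every $u \in \Xi$ and every $x_i \in \Xcal_k$, hence $\indi{g(\bar x_k, u) > 0 \text{ for some } u \in U_{r_i}(\rand_i)} \le \indi{g(x_i,u)+\gamma > 0 \text{ for some } u \in U_{r_i}(\rand_i)}$ for each $x_i$, and taking the infimum over $x_i \in \Xcal_k$ on the right preserves the inequality. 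This yields the bound $\prob{\sum_{i=1}^N \indi{g(\bar x_k, u) > 0 \text{ for some } u \in U_{r_i}(\rand_i)} \le \alpha N}$.

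The summand $\sum_{i=1}^N \indi{\exists u \in U_{r_i}(\rand_i), \, g(\bar x_k, u) > 0}$ is a Poisson binomial random variable with success probabilities $v_i(\bar x_k) := \prob{\exists u \in U_{r_i}(\rand_i), \, g(\bar x_k, u) > 0}$, so the probability above equals $\Psi(\alpha N; v_1(\bar x_k), \dots, v_N(\bar x_k))$. Now I would apply Lemma~\ref{lemma:intermediate result} exactly as in the finite-$\Xcal$ proof: since $\bar x_k \in \overline{\Xcal}^{N+1}_\risk$ we have $v^{N+1}(\bar x_k) = \prob{g(\bar x_k, \rand_{N+1}) > 0} > \risk$, and the lemma gives $v_i(\bar x_k) = 1 - \prob{g(\bar x_k, u) \le 0 \ \forall u \in U_{r_i}(\rand_i)} \ge 1 - \prob{g(\bar x_k, \rand_{N+1}) \le 0} - \rho(N+1-i)/r_i > \risk - \rho(N+1-i)/r_i$, hence $v_i(\bar x_k) \ge p_i$ for all $i$. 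Since $\Psi(z; \cdot)$ is nonincreasing in each success-probability argument, $\Psi(\alpha N; v_1(\bar x_k),\dots,v_N(\bar x_k)) \le \Psi(\alpha N; p_1,\dots,p_N)$. Summing over the $K \le (LD/\gamma+1)^n$ cells gives the claim.

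The main obstacle — and the place needing the most care — is the commutation step in the middle paragraph: one cannot simply swap $\inf_{x\in\Xcal_k}$ with the ``$\exists u$'' quantifier inside the indicator, so the argument must proceed pointwise in $u$ via the Lipschitz bound $g(\bar x_k, u) \le g(x_i, u) + \gamma$ and then use monotonicity of the indicator and of the infimum, precisely mirroring the logic already spelled out for \eqref{eq:bb} in the proof of Theorem~\ref{thm:main}. Everything else is a routine merge of the two earlier proofs; I would also note in passing that $\Psi(\alpha N; v_1,\dots,v_N) \le \Psi(\alpha N; p_1,\dots,p_N)$ requires the elementary monotonicity of the Poisson binomial CDF in its parameters, which can be justified by a standard coupling of Bernoulli random variables.
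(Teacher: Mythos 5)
Your proof is correct and is exactly the combination of the covering argument from Theorem~\ref{thm:main} with the Poisson-binomial and Lemma~\ref{lemma:intermediate result} bookkeeping from Theorem~\ref{theorem:robust SAA for finite X} that the paper has in mind (the paper omits the proof for precisely this reason). Your care with the order of the $\inf_{x_i}$, the ``$\exists u$'' quantifier, and the pointwise Lipschitz bound $g(\bar x_k,u)\le g(x_i,u)+\gamma$ is exactly the right way to justify the analogue of \eqref{eq:bb}, and the monotonicity of $\Psi$ in its success probabilities is the same (implicitly used) step as in the finite-$\Xcal$ case.
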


We omit the proof of Theorem \ref{theorem:robust SAA for infinite X}, as it is straightforward to show by combining the arguments used in proving Theorems \ref{thm:main} and \ref{theorem:robust SAA for finite X}.
 \section{Conclusion} \label{sec:conclusion}
We investigate sample average approximations for chance constraints in both stationary and nonstationary environments. For stationary environments, we provide a new upper bound on the \emph{probability of infeasibility} for the setting in which $\Xcal$ has possibly infinite cardinality. For nonstationary environments,  we propose a robust SAA scheme in which each random sample encodes a robust sampled constraint defined in terms of an uncertainty set whose radius depends on the distributional uncertainty of the associated random sample. We derive upper bounds on the corresponding \emph{probability of infeasibility} for this class of approximations.

\bibliographystyle{IEEEtran}
\bibliography{references}{\markboth{References}{References}}

\end{document}